\theoremstyle{plain}
\newtheorem{thm}{Theorem}[section]
\theoremstyle{definition}
\newtheorem{ex}[thm]{Example}
\newtheorem{rem}[thm]{Remark}
\numberwithin{equation}{section}
\begin{document}

\title[Eigenvalues of Hammerstein integral equations]{An eigenvalue result for Hammerstein integral equations with sign
changing nonlinearities and functional terms} 

\date{}

\author[G. Infante]{Gennaro Infante}
\address{Gennaro Infante, Dipartimento di Matematica e Informatica, Universit\`{a} della
Calabria, 87036 Arcavacata di Rende, Cosenza, Italy}%
\email{gennaro.infante@unical.it}%

\author[G. A. Veltri]{Giuseppe Antonio Veltri}
\address{Giuseppe Antonio Veltri, Dipartimento di Matematica e Informatica, Universit\`{a} della
Calabria, 87036 Arcavacata di Rende, Cosenza, Italy}%
\email{veltrigiuseppea@gmail.com}%

\begin{abstract} 
We discuss, via a version of the Birkhoff-Kellogg theorem, the existence of
positive and negative eigenvalues of Hammerstein integral equations with sign-changing nonlinearities and functional terms. 
The corresponding eigenfunctions have a given norm that, in turn, provides a location for the eigenvalues. 
As an application, we study the solvability of parameter-dependent
boundary value problems for nonlocal ordinary differential equations. Two examples illustrate the applicability of the theory in the case of mixed and Dirichlet boundary conditions.
\end{abstract}

\subjclass[2020]{Primary 45C05, secondary 34B08, 47H10}

\keywords{Eigenvalue, eigenfunction, nonlocal ODE, sign-changing nonlinearity, Birkhoff--Kellogg theorem}

\maketitle

\section{Introduction}
In this manuscript
we study the existence of eigenvalues and eigenfunctions of the Hammerstein integral equation
\begin{equation}\label{HIE}
u(t) = \lambda \int_{0}^{1} k(t,s) f(s, u(s), H[u]) \, ds,
\end{equation}
where $k$ and $f$ are suitable functions and $H$ is a suitable functional.
Integral equations of a form similar to~\eqref{HIE}, under a variety of sign and growth conditions, have been object of interest by a number of authors, we mention here the papers \cite{bell1, bell2, correa, Luis, Bogdan, Goodrich3, Goodrich4, Goodrich6, Goodrich7, gi-math, Stanczy} and references therein.
A motivation for studying the integral equation~\eqref{HIE} is that it can occur when dealing with boundary value problems (BVPs) with nonlocal terms arising in the differential equation. For example, Sta\'nczy~\cite{Stanczy}
studied the existence of radial positive solutions of the BVP
\begin{equation} \label{nonlocal-stan}
\left\{
\begin{array}{l}
-\Bigl(\int_{\Omega} f(u(x))\,dx\Bigr)^p\Delta u(x)=\lambda (f(u(x)))^{q}, \quad x\in \Omega, \\
u=0, \quad \text{on }\partial \Omega ,%
\end{array}%
\right. 
\end{equation}%
where $\Omega \subset \mathbb{R}^n$ is either a ball ($n=1$) or an annulus. Note that the choice of $p=q=1$ and $f(u)=e^u$ in~\eqref{nonlocal-stan} yields
\begin{equation} \label{nonlocal-stan-p}
\left\{
\begin{array}{l}
-\Delta u(x)=\lambda  \dfrac{e^{u(x)}}{\int_0^1 e^{u(x)} \, dx}, \quad x\in \Omega, \\
u=0, \quad \text{on }\partial \Omega .%
\end{array}%
\right. 
\end{equation}%

In the particular case of a ball, in~\cite{Stanczy} the BVP~\eqref{nonlocal-stan} is rewritten firstly in the ordinary differential equation~(ODE) form
\begin{equation} \label{nonl-stan-ode}
\left\{
\begin{array}{l}
-u''(t)=\lambda  \dfrac{(f(u(t))^{q}}{\Bigl(\int_{0}^1 f(u(x))\,dx\Bigr)^p}, \quad t\in (0,1), \\
u'(0)=u(1)=0,%
\end{array}%
\right. 
\end{equation}%
and then the BVP~\eqref{nonl-stan-ode} is transformed into the Hammerstein integral equation
\begin{equation}\label{HIES}
u(t) = \lambda \int_{0}^{1} k(t,s) \frac{(f(u(s))^{q}}{\Bigl(\int_{0}^1 f(u(x))\,dx\Bigr)^p} \, ds.
\end{equation}
Positive solutions of~\eqref{HIES} are then obtained via the classical Krasnosel'ski\u\i{}-Guo fixed-point theorem. It is worth to mention that the BVP~\eqref{nonlocal-stan} arises in the description of various physical phenomena, for more details we refer the reader to the Introduction of~\cite{Stanczy}. 

Fija\l kowski and Przeradzki~\cite{Bogdan} and Engui\c{c}a and Sanchez~\cite{Luis} incorporate functional terms within the non-linearities and study, via topological methods, the solvability of BVPs of the form 
\begin{equation*}
\left\{
\begin{array}{l}
-\Delta u=f\bigl(u,\int_{\Omega} g(u(x))\,dx\bigr), \quad x\in \Omega, \\
u=0, \quad \text{on }\partial \Omega,%
\end{array}%
\right. 
\end{equation*}
where $\Omega\subset \mathbb{R}^n$ with $n\geq 2$ is a ball~\cite{Luis} or either a ball or an annulus~\cite{Bogdan}. Note that the interesting case of nonlinearities that~\emph{change sign}
is studied in~\cite{Luis, Bogdan}. We stress that BVPs with nonlinearities of indefinite sign have been studied recently by a number of authors, we refer the reader, for the local case,
to the recent monograph by Feltrin~\cite{Guglielmo} and, for the nonlocal case,
to the papers by Goodrich~\cite{Goodrich, Goodrich3, Goodrich5, Goodrich6}, and references therein.

Our approach relies on a version of a classical tool of Nonlinear Functional Analysis, the classical Birkhoff-Kellogg theorem, as stated in the book by Appell, De Pascale and Vignoli~\cite{applbook}. We mention that this version of the Birkhoff-Kellogg theorem has been used recently in~\cite{Inf-Luc} to investigate the solvability of third order parameter-dependent BVPs with associated sign-changing kernels. Here we prove, under suitable estimates on the nonlinearities involved, the existence of couples of positive and negative eigenvalues with associated eigenfunctions of fixed norm. We also furnish some bounds on the modulus of the eigenvalues that depend on the norm of the eigenfunctions. We stress that our theory is directly applicable to eigenvalue problems for second order nonlinear ODEs with functional terms, subject to homogeneous Dirichlet and mixed boundary conditions~(BCs), for details see Remark~\ref{rem1}. We also furnish two examples in which the occurring nonlinearities can be seen as a variation of the ones occurring in~\eqref{nonlocal-stan-p}, under the presence of a sign-changing weight function. In our examples we illustrate in details the applicability of our theory. Our results are, as far as we are aware, new and complement the results in~\cite{Luis, Bogdan}.

\section{Some eigenvalue results}
Let $r>0$ and let $(X,||\cdot||)$ be a normed space. We denote by
   $$ 
        B_r(X):=\{x \in X \mid ||x|| < r\},\quad \partial B_r(X):=\{x \in X \mid ||x|| = r\},
    $$
and recall the following version of the Birkhoff-Kellogg theorem, which will be useful in the sequel. 
\begin{thm}[\cite{applbook}, Theorem 10.2] \label{B-K}
Let $X$ be an infinite dimensional real Banach space, and let 
$\hat{F} : \overline{B}_\rho(X) \to X$ be a compact operator such that
\begin{equation}\label{cBK}
    \inf_{x \in \partial B_\rho(X)} \|\hat{F}(x)\| > 0.
\end{equation}
Then there exist $\lambda_+>0,\lambda_-<0 \in \Lambda_\rho(\hat{F})$, where 
\[
\Lambda_\rho(\hat{F}):=\{ \lambda \in \mathbb{R} \mid \text{there exists } x \in \partial B_\rho(X) \text{ such that } \hat{F}(x) = \lambda x \}.
\]
\end{thm}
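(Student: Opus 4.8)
The plan is to reduce the statement to the existence of a single positive eigenvalue, and then to produce such an eigenvalue through a normalization argument combined with Schauder's fixed point theorem. First I would observe that the two claims are symmetric: once the existence of a positive eigenvalue is established for every compact map satisfying \eqref{cBK}, applying this to $-\hat{F}$ (which is again compact and satisfies $\inf_{x\in\partial B_\rho(X)}\|-\hat{F}(x)\|=\inf_{x\in\partial B_\rho(X)}\|\hat{F}(x)\|>0$) yields $x_1\in\partial B_\rho(X)$ and $\mu>0$ with $-\hat{F}(x_1)=\mu x_1$, that is $\hat{F}(x_1)=\lambda_- x_1$ with $\lambda_-=-\mu<0$. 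Thus it suffices to produce a single $\lambda_+>0$.

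To obtain a positive eigenvalue, set $m:=\inf_{x\in\partial B_\rho(X)}\|\hat{F}(x)\|>0$ and define the normalized map
\begin{equation*}
g:\partial B_\rho(X)\to\partial B_\rho(X),\qquad g(x):=\rho\,\frac{\hat{F}(x)}{\|\hat{F}(x)\|},
\end{equation*}
which is well defined thanks to \eqref{cBK}. Since $\hat{F}$ is compact and bounded away from $0$ on the sphere, $g$ is continuous with relatively compact image, hence compact. A fixed point $x^\ast=g(x^\ast)$ would immediately give $\hat{F}(x^\ast)=\bigl(\|\hat{F}(x^\ast)\|/\rho\bigr)x^\ast$, i.e.\ an eigenpair with $\lambda_+=\|\hat{F}(x^\ast)\|/\rho>0$. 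The whole problem is therefore reduced to finding a fixed point of the compact self-map $g$ of the sphere.

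The obstruction is that $\partial B_\rho(X)$ is not convex, so Schauder's theorem cannot be applied to $g$ directly. Here I would exploit the infinite dimensionality of $X$: by the classical result of Klee (equivalently, via the Dugundji extension theorem) there exists a continuous retraction $r:\overline{B}_\rho(X)\to\partial B_\rho(X)$, a map that exists precisely because $X$ is infinite dimensional and that has no finite-dimensional analogue. Then $g\circ r:\overline{B}_\rho(X)\to\partial B_\rho(X)\subseteq\overline{B}_\rho(X)$ is a compact self-map of the closed, bounded, convex set $\overline{B}_\rho(X)$, so Schauder's fixed point theorem furnishes $x^\ast$ with $x^\ast=g(r(x^\ast))$. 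Since the right-hand side lies in $\partial B_\rho(X)$, one gets $x^\ast\in\partial B_\rho(X)$, whence $r(x^\ast)=x^\ast$ and therefore $g(x^\ast)=x^\ast$, the desired fixed point.

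I expect the main obstacle to be precisely the construction and justification of the retraction $r$: this is the step where the hypothesis that $X$ is infinite dimensional is indispensable, and it is what blocks the corresponding (false) finite-dimensional statement. A secondary point to verify carefully is that the composition $g\circ r$ is genuinely compact; this follows since $g$ already has relatively compact image, so only the continuity of $r$ is needed, not its compactness. Once the fixed point is produced, reading off $\lambda_+$ and, through the reduction above, $\lambda_-$, completes the argument.
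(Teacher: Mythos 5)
The paper does not actually prove this statement: it imports it verbatim as Theorem 10.2 of the cited monograph by Appell, De Pascale and Vignoli \cite{applbook}, so there is no in-paper proof to compare against. Your argument is correct, and it is essentially the standard proof lying behind the cited theorem: the symmetry reduction $\hat F \mapsto -\hat F$ for the negative eigenvalue, the normalization $g(x)=\rho\,\hat F(x)/\|\hat F(x)\|$ (well defined and with relatively compact image thanks to \eqref{cBK} and the compactness of $\hat F$), the Klee retraction $r:\overline{B}_\rho(X)\to\partial B_\rho(X)$ --- which is exactly where the infinite dimensionality of $X$ is used, and which fails in finite dimensions by Brouwer's theorem --- and Schauder's theorem applied to the compact self-map $g\circ r$ of the closed ball, whose fixed point necessarily lies on the sphere and hence is a genuine fixed point of $g$, producing $\lambda_+=\|\hat F(x^\ast)\|/\rho>0$. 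The delicate points are all handled correctly: the compactness of $g\circ r$ needs only continuity of $r$, and the identity $r(x^\ast)=x^\ast$ holds because the fixed point lies on the sphere, where the retraction acts as the identity.
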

In what follows we make use of the Banach space $C([0,1])$, endowed with the usual norm
\[
\|u\|_\infty := \sup_{t \in [0,1]} |u(t)|,
\]
and set, for the sake of brevity, ${B}_\rho := {B}_\rho(C([0,1]))$. With these ingredients we can now state our main result on the existence and localization of eigenvalues and eigenfunctions of the integral equation~\eqref{HIE}.
\begin{thm}\label{EHI}
Let \( \rho \in (0, +\infty) \) and assume that:
\begin{enumerate}
    \item \( k : [0,1]^2 \rightarrow [0, +\infty) \) is continuous.
    \item \( H : \overline{B}_\rho \rightarrow \mathbb{R} \) is continuous and there exist \( \underline{H_\rho}, \overline{H_\rho} \in \mathbb{R} \) such that 
    \[ \underline{H_\rho} \leq H[u] \leq \overline{H_\rho}, \text{ for every } u \in \overline{B_\rho}. \]
    \item \( f : \Pi_\rho \subset \mathbb{R}^3 \rightarrow \mathbb{R} \) is continuous, where \(\Pi_\rho := [0,1] \times [-\rho,\rho] \times [\underline{H_\rho}, \overline{H_\rho}]\).
    \item There exist two continuous functions \( \underline{f_\rho}, \overline{f_\rho} : [0,1] \rightarrow \mathbb{R} \) such that
    \[ \underline{f_\rho}(t) \leq f(t,u,v) \leq \overline{f_\rho}(t), \text{ for every } (t,u,v) \in \Pi_\rho.\]
    \item[$(5)$] Let 
    \begin{align*}
        \underline{F_\rho}(t) := \int_0^1 k(t,s)\underline{f_\rho}(s)\,ds,\quad
        \overline{F_\rho}(t) := \int_0^1 k(t,s)\overline{f_\rho}(s)\,ds,
    \end{align*}
    and assume that at least one of the following conditions holds:
    \begin{itemize}
        \item[$(5a)$] There exists $t_\rho \in [0,1]$ such that $\overline{F_\rho}(t_\rho)<0$.
        \item[$(5b)$] There exists $t_\rho \in [0,1]$ such that $\underline{F_\rho}(t_\rho)>0$.
    \end{itemize}
\end{enumerate}
Then there exist $\lambda_\rho^+>0$, $\lambda_\rho^-<0$ and $u^+,u^-\in\partial B_\rho$ such that $(\lambda_\rho^+,u^+)$ and $(\lambda_\rho^-,u^-)$ solve the integral equation~\eqref{HIE}.

Furthermore, assume that the couple $(\lambda_\rho,u_\rho)$ satisfies the integral equation~\eqref{HIE}. Then the following is true.
\begin{itemize}
    \item[$(6a)$] If $(5a)$ holds, then we have the estimate
    $|\lambda_\rho| \leq -\dfrac{\rho}{\overline{F_\rho}(t_\rho)}.$\vspace{1mm}

    \item[$(6b)$] If $(5b)$ holds, then we have the estimate $|\lambda_\rho| \leq \dfrac{\rho}{\underline{F_\rho}(t_\rho)}$.
\end{itemize}
\end{thm}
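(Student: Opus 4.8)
The plan is to recast \eqref{HIE} as an eigenvalue problem for a single Hammerstein operator and then invoke Theorem~\ref{B-K}. First I would define $F : \overline{B}_\rho \to C([0,1])$ by
\[
F(u)(t) := \int_0^1 k(t,s)\, f\bigl(s, u(s), H[u]\bigr)\, ds .
\]
This is well defined: for $u \in \overline{B}_\rho$ we have $u(s) \in [-\rho,\rho]$ for every $s$, and by hypothesis~$(2)$ the scalar $H[u]$ lies in $[\underline{H_\rho}, \overline{H_\rho}]$, so the integrand is evaluated on $\Pi_\rho$, where $f$ is continuous by~$(3)$. The key observation is that a pair $(\lambda,u)$ with $u \in \partial B_\rho$ solves \eqref{HIE} precisely when $F(u) = \mu u$ and $\lambda = 1/\mu$; since the sign of $\lambda$ matches that of $\mu$, it suffices to produce eigenpairs of $F$ in the sense of Theorem~\ref{B-K} with eigenvalue of each sign and pass to reciprocals (legitimate because the theorem yields $\mu_\pm \neq 0$).

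Next I would verify the hypotheses of Theorem~\ref{B-K} on $X = C([0,1])$, which is an infinite dimensional real Banach space. Compactness of $F$ is routine: continuity follows from the uniform continuity of $k$ on $[0,1]^2$, the continuity of $H$, and the continuity of $f$ on the compact set $\Pi_\rho$; equicontinuity of $F(\overline{B}_\rho)$ follows from the uniform continuity of $t \mapsto k(t,s)$, and uniform boundedness from hypothesis~$(4)$, so the Arzel\`a--Ascoli theorem gives relative compactness.

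The crux, and the main obstacle, is the coercivity condition \eqref{cBK}; this is exactly what $(5a)$/$(5b)$ are designed to deliver. For any $u \in \overline{B}_\rho$, hypothesis~$(4)$ together with $k \ge 0$ gives, upon integrating $k(t,\cdot)$ against the two-sided bound on $f$,
\[
\underline{F_\rho}(t) \le F(u)(t) \le \overline{F_\rho}(t), \qquad t \in [0,1].
\]
Under $(5a)$, evaluating at $t_\rho$ yields $F(u)(t_\rho) \le \overline{F_\rho}(t_\rho) < 0$, hence $\|F(u)\|_\infty \ge -\overline{F_\rho}(t_\rho) > 0$; under $(5b)$ the same computation gives $\|F(u)\|_\infty \ge \underline{F_\rho}(t_\rho) > 0$. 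In either case the lower bound is independent of $u$, so $\inf_{u \in \partial B_\rho}\|F(u)\|_\infty > 0$. Theorem~\ref{B-K} then furnishes $\mu_+ > 0$, $\mu_- < 0$ and $u^+, u^- \in \partial B_\rho$ with $F(u^\pm) = \mu_\pm u^\pm$, and setting $\lambda_\rho^\pm := 1/\mu_\pm$ produces the claimed eigenpairs solving \eqref{HIE}.

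Finally, for the localization $(6a)$/$(6b)$, let $(\lambda_\rho, u_\rho)$ solve \eqref{HIE} with $u_\rho \in \overline{B}_\rho$ (which is forced if the nonlinearity is to be evaluated on $\Pi_\rho$); then $u_\rho = \lambda_\rho F(u_\rho)$. Evaluating at $t_\rho$ and taking absolute values gives
\[
\rho \ge |u_\rho(t_\rho)| = |\lambda_\rho|\,\bigl|F(u_\rho)(t_\rho)\bigr| .
\]
Under $(5a)$ we have $|F(u_\rho)(t_\rho)| \ge -\overline{F_\rho}(t_\rho) > 0$, so dividing yields $|\lambda_\rho| \le -\rho/\overline{F_\rho}(t_\rho)$; under $(5b)$, $|F(u_\rho)(t_\rho)| \ge \underline{F_\rho}(t_\rho) > 0$ gives $|\lambda_\rho| \le \rho/\underline{F_\rho}(t_\rho)$. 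The single point requiring care throughout is that every evaluation of $f$ remains inside $\Pi_\rho$, guaranteed by $u_\rho \in \overline{B}_\rho$ and hypothesis~$(2)$; this is precisely why the estimates are anchored at the distinguished point $t_\rho$ where $\overline{F_\rho}$ (resp.\ $\underline{F_\rho}$) has a definite sign.
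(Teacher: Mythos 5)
Your proof is correct and follows essentially the same route as the paper's: both apply the Birkhoff--Kellogg theorem to the Hammerstein operator, verify condition \eqref{cBK} by integrating the two-sided bound of hypothesis $(4)$ against $k$ and evaluating at $t_\rho$, and obtain the localization $(6a)$/$(6b)$ from those same inequalities. The only differences are matters of explicitness --- you spell out the reciprocal passage $\lambda_\rho^\pm = 1/\mu_\pm$ from Birkhoff--Kellogg eigenpairs to solutions of \eqref{HIE} and the Arzel\`a--Ascoli compactness argument, both of which the paper leaves implicit.
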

\begin{proof}
Consider the operator
\[
Tu(t) := \int_0^1 k(t,s) f(s,u(s),H[s]) \, ds.
\]
Due to the assumptions (1)--(3), the operator \( T \) maps $\overline{B}_\rho \subset C([0,1])$ into $C([0,1])$ and is compact. First of all note that, due to (1)--(4), for every $u \in \partial B_\rho$ and for every $t \in [0,1]$, we have
\[ \underline{F_\rho}(t)=\int_0^1 k(t,s) \underline{f_\rho}(s) ds \leq T u(t) \leq \int_0^1 k(t,s) \overline{f_\rho}(s) ds = \overline{F_\rho}(t). \]
Suppose now that $(5b)$ holds. If we take $u \in \partial B_\rho$, we obtain
\begin{equation}\label{ineq1}
0< \underline{F_\rho}(t_\rho) \leq T u(t_\rho) \leq ||T u||_\infty.    
\end{equation}
Note that the inequality $\underline{F_\rho}(t_\rho) \leq ||T u||_\infty$ does not depend on the chosen \( u \). Hence, by passing to the infimum, we obtain
\begin{equation*}
\inf_{u \in \partial B_\rho} ||T u||_\infty \geq \underline{F_\rho}(t_\rho) > 0.    
\end{equation*}
Assume now that $(5a)$ holds, then, for $u \in \partial B_\rho$, we have
\begin{equation}\label{ineq2}
    ||T u||_\infty \geq -T u(t_\rho) \geq -\overline{F_\rho}(t_\rho)>0.
\end{equation}
As in the previous case we obtain
\[
\inf_{u \in \partial B_\rho} ||T u||_\infty > 0.
\]
Since, both cases, the condition \eqref{cBK} is satisfied, the existence of pairs of eigenvalues and eigenfunctions with the desired properties follows by means of Theorem~\ref{B-K}.

For the eigenvalue estimates, we make use of the inequalities~\eqref{ineq1} and \eqref{ineq2}. In particular, if $(5a)$ holds, we obtain
\[
\rho=||u_\rho||_\infty = |\lambda_\rho| \|Tu_\rho\|_\infty {\geq} -|\lambda_\rho| \overline{F_\rho}(t_\rho)>0,
\]
which yields $|\lambda_\rho| \leq -\dfrac{\rho}{\overline{F_\rho}(t_\rho)}$.\\
In a similar way, if $(5b)$ holds, we get
\[
\rho=||u_\rho||_\infty = |\lambda_\rho| \|Tu_\rho\|_\infty {\geq} |\lambda_\rho| \underline{F_\rho}(t_\rho)>0,
\]
which implies that $|\lambda_\rho| \leq \dfrac{\rho}{\underline{F_\rho}(t_\rho)}$.
\end{proof}
\begin{rem}\label{remest}
In the special case of nonlinearities of the form $$f(t,u,H[u])=g(t)\ell(u,H[u]),$$ where $g$ is a continuous function, allowed to change sign, and $\ell$ is a continuous non-negative function, one may construct the functions $\underline{F_\rho}$ and $\overline{F_\rho}$ that appear in Theorem~\ref{EHI} in the following manner.
Assume that $\underline{\ell_\rho},\overline{\ell_\rho} \geq 0$ satisfy the inequality
    \[
    \underline{\ell_\rho} \leq \ell(u,v) \leq \overline{\ell_\rho}, \text{ for every } (u,v) \in [-\rho,\rho] \times [\underline{H_\rho},\overline{H_\rho}].
    \]
    Let us use the decomposition $g(t)=g_+(t)-g_-(t)$, where
    \[
    g_+(t):=\max\{0,g(t)\}\quad\text{and}\quad g_-(t):=-\min\{0,g(t)\}.
    \]
    Note that we get, for every $u \in \overline{B}_\rho$ and every $t \in [0,1]$,
    \begin{equation}\label{stima}
        \underline{f_\rho}(t):= \underline{\ell_\rho} g_+(t) - \overline{\ell_\rho} g_-(t) \leq g(t)\ell(u(t),H[u]) \leq \overline{\ell_\rho} g_+(t) - \underline{\ell_\rho} g_-(t) =: \overline{f_\rho}(t).
    \end{equation}
    From \eqref{stima} we obtain
    \begin{align}
    \underline{F_\rho}(t) &= \underline{\ell_\rho} \int_0^1 k(t,s) g_+(s) \, ds - \overline{\ell_\rho}\int_0^1 k(t,s) g_-(s) \, ds,\label{F_low gen}\\
    \overline{F_\rho}(t) &= \overline{\ell_\rho} \int_0^1 k(t,s) g_+(s) \, ds - \underline{\ell_\rho}\int_0^1 k(t,s) g_-(s) \, ds\label{F_upp gen}.
    \end{align}
\end{rem}
\begin{rem}\label{rem1}
Note that Theorem~\ref{EHI} is directly applicable to the solvability of BVPs for differential equations of the form
\[
-u''(t) = \lambda f(t, u(t), H[u]),
\]
subject to a variety of BCs. For illustrative reasons in the sequel, we focus on the special cases of the BCs
\begin{align}
    u(0)=u'(1)=0,\label{mbc}\\
    u(0)=u(1)=0.\label{dbc}
\end{align}
In both cases, we obtain an integral equation of the form~\eqref{HIE}, where $k$ is the associated Green's function, that is
\begin{equation}\label{greenmista}
    k(t,s)=
\begin{cases}
    t, &\text{if } 0\leq t\leq s\leq1, \\
    s, &\text{if } 0\leq s\leq t\leq1,
\end{cases}
\end{equation}
in the case of the BC~\eqref{mbc}, and
\begin{equation}\label{greendir}
k(t,s)=
\begin{cases}
    t(1-s), &\text{if } 0\leq t\leq s\leq1, \\
    s(1-t), &\text{if } 0\leq s\leq t\leq1,
\end{cases}
\end{equation}
in the case of BC~\eqref{dbc}.
\end{rem}
By means of the Remarks~\ref{remest} and~\ref{rem1}, we illustrate, in two concrete examples, the applicability of Theorem~\ref{EHI}.
\begin{ex}\label{hmp}
Consider the BVP
    \begin{equation}\label{hmpeq}
    \begin{cases}
    -u''(t) = \lambda  \dfrac{\sin\left(\frac{3}{2}\pi t\right)e^{u(t)}}{\int_0^1 e^{u(x)} \, dx}, & t\in(0,1),\\
    u(0)=u'(1)=0.
    \end{cases}
    \end{equation}
To the BVP~\eqref{hmpeq} we associate the integral equation
\begin{equation}\label{HIEmista}
    u(t) = \lambda \int_{0}^{1} k(t,s) \dfrac{\sin\left(\frac{3}{2}\pi s\right)e^{u(s)}}{\int_0^1 e^{u(x)} \, dx} \, ds,
\end{equation}
where $k(t,s)$ is as in \eqref{greenmista}.

We proceed in seeking two functions $\underline{f_\rho}$ and $\overline{f_\rho}$ that satisfy the statement of the Theorem~\ref{EHI}.

Fix \( u \in \overline{B}_\rho \). Firstly note that we have
\[ e^{-\rho} \leq \int_0^1 e^{u(x)} \, dx \leq e^\rho, \]
which yields
\[
e^{-\rho} \leq \frac{1}{\int_0^1 e^{u(x)} \, dx} \leq e^\rho.
\]
Furthermore, note that we have
\[
e^{-\rho} \leq e^{u(t)} \leq e^\rho, \text{ for every } t \in [0,1],
\]
and therefore we obtain that
\[
e^{-2\rho} \leq \frac{e^{u(t)}}{\int_0^1 e^{u(x)} \, dx} \leq e^{2\rho}, \text{ for every } t \in [0,1].
\]

We now utilize Remark~\ref{remest} by taking $g(t) = \sin\bigl(\frac{3}{2}\pi t\bigr)$. Note that the function $g$ is non-negative in $\left[ 0, 2/3 \right]$ and is non-positive in $\left[ 2/3,1 \right]$. This means that \eqref{F_low gen} and \eqref{F_upp gen} read
\begin{align}
    \underline{F_\rho}(t) &= e^{-2\rho} \int_0^{\frac{2}{3}} k(t,s) \sin\left(\frac{3}{2}\pi s\right) \, ds + e^{2\rho}\int_{\frac{2}{3}}^1 k(t,s) \sin\left(\dfrac{3}{2}\pi s\right) \, ds,\label{F_low}\\
    \overline{F_\rho}(t) &= e^{2\rho} \int_0^{\frac{2}{3}} k(t,s) \sin\left(\frac{3}{2}\pi s\right) \, ds + e^{-2\rho}\int_{\frac{2}{3}}^1 k(t,s) \sin\left(\dfrac{3}{2}\pi s\right) \, ds.\label{F_upp}
\end{align}
By direct calculation, we obtain that
\[
\int_0^{\frac{2}{3}} k(t,s) \sin\left( \dfrac{3}{2}\pi s \right) \,ds =
\begin{cases}
    \dfrac{4}{9\pi^2} \sin\left( \dfrac{3}{2}\pi t \right) + \dfrac{2t}{3\pi}, & \text{if}\quad 0 \leq t \leq \dfrac{2}{3},\vspace{1.5mm}\\
    \dfrac{4}{9\pi}, & \text{if}\quad \dfrac{2}{3} \leq t \leq 1,
\end{cases}
\]
and
\[
\int_{\frac{2}{3}}^1 k(t,s) \sin\left( \dfrac{3}{2}\pi s \right) \,ds =
\begin{cases}
    -\dfrac{2t}{3\pi}, & \text{if}\quad 0 \leq t \leq \dfrac{2}{3},\vspace{1mm} \\
    \dfrac{4}{9\pi^2} \sin\left( \dfrac{3}{2}\pi t \right) - \dfrac{4}{9\pi}, & \text{if}\quad \dfrac{2}{3} \leq t \leq 1,
\end{cases}
\]
which yields
\[
\underline{F_\rho}(t) =
\begin{cases}
    e^{-2\rho}\left[ \dfrac{4}{9\pi^2} \sin\left( \dfrac{3}{2}\pi t \right) + \dfrac{2t}{3\pi} \right] + e^{2\rho}\left[ -\dfrac{2t}{3\pi} \right], & \text{if}\quad 0 \leq t \leq \dfrac{2}{3},\vspace{1.5mm}\\
    e^{-2\rho}\left[ \dfrac{4}{9\pi} \right] + e^{2\rho}\left[ \dfrac{4}{9\pi^2} \sin\left( \dfrac{3}{2}\pi t \right) - \dfrac{4}{9\pi} \right], & \text{if}\quad \dfrac{2}{3} \leq t \leq 1,
\end{cases}
\]
and
\[
\overline{F_\rho}(t) =
\begin{cases}
    e^{2\rho}\left[ \dfrac{4}{9\pi^2} \sin\left( \dfrac{3}{2}\pi t \right) + \dfrac{2t}{3\pi} \right] + e^{-2\rho}\left[ -\dfrac{2t}{3\pi} \right], & \text{if}\quad 0 \leq t \leq \dfrac{2}{3},\vspace{1.5mm}\\
    e^{2\rho}\left[ \dfrac{4}{9\pi} \right] + e^{-2\rho}\left[ \dfrac{4}{9\pi^2} \sin\left( \dfrac{3}{2}\pi t \right) - \dfrac{4}{9\pi} \right], & \text{if}\quad \dfrac{2}{3} \leq t \leq 1.
\end{cases}
\]
Note that the functions $\underline{F_\rho}$ and $\overline{F_\rho}$ are continuous and $\underline{F_\rho}(0)=\overline{F_\rho}(0)=0$. Figure~\ref{fig1} illustrates the graphs of the functions $\underline{F_\rho}(t)$ and $\overline{F_\rho}(t)$ in the cases $\rho=10^{-1}$ and $\rho=1$.
\begin{figure}[h]
    \centering
    \includegraphics[width=1.0\linewidth]{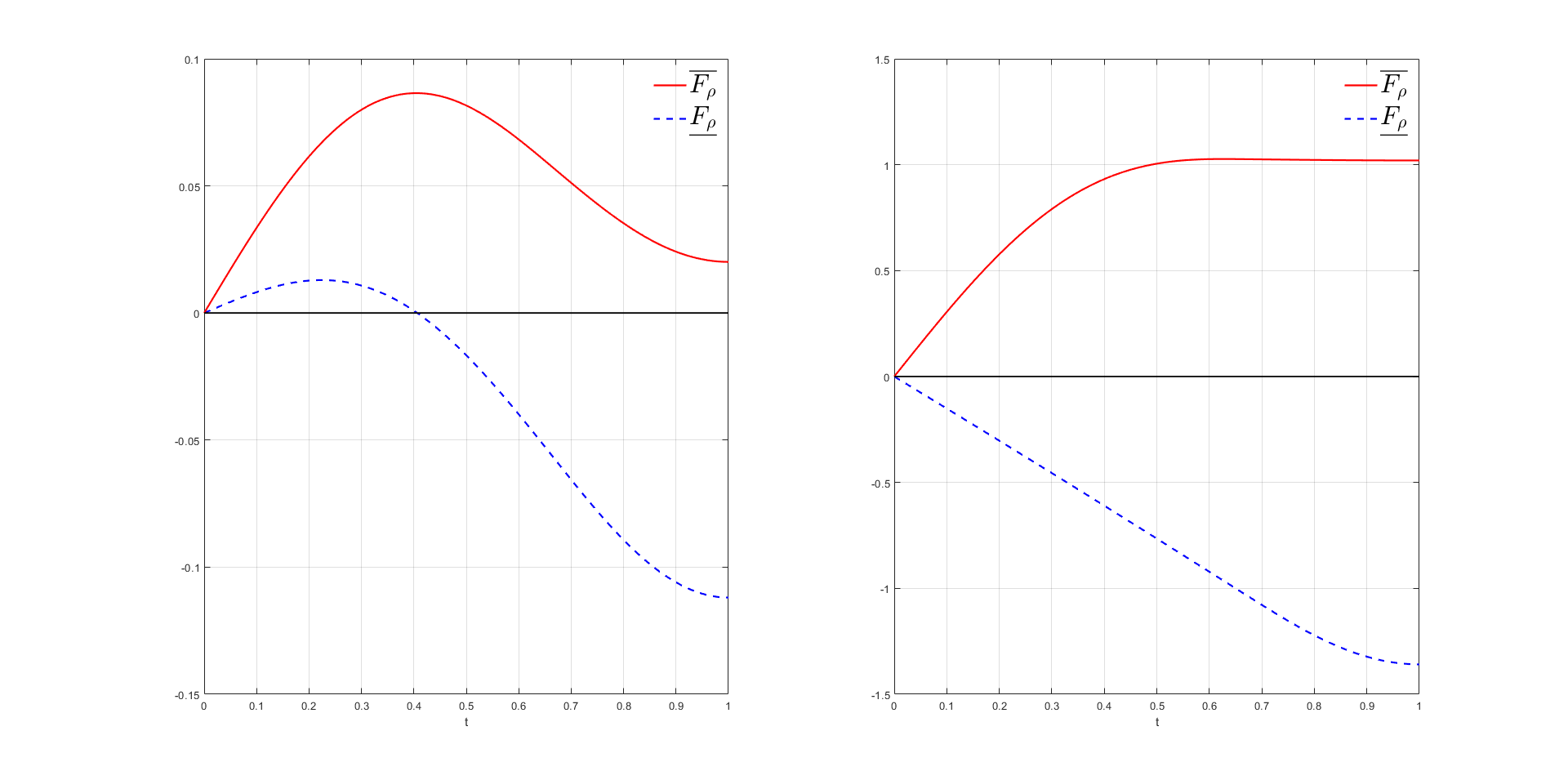}
    \caption{The plot on the left illustrates the case $\rho=10^{-1}$, and the one on the right displays the case $\rho=1$.}\label{fig1}
\end{figure}

We now seek the values of $\rho$ such that the function $\underline{F_\rho}$ achieves a positive maximum, so that the condition $(5b)$ holds. To do this, we follow the following scheme:
\begin{enumerate}
    \item We show that the function $\underline{F_\rho}$ is non-increasing in $\left[ 2/3,1 \right]$ and that $\underline{F_\rho}\left(2/3\right)<0$; this leads us to focus on the interval $\left[ 0,2/3 \right)$.

    \item We know that $\underline{F_\rho}(0)=0$, hence we study the interval $\left(0,2/3\right)$.

    \item For $\rho\geq \ln\sqrt[4]{2}$, we show that $\underline{F_\rho}$ is decreasing in $\left(0,2/3\right)$, and therefore a positive maximum is never attained. Nevertheless, for $0<\rho<\ln\sqrt[4]{2}$, we show that a positive maximum is attained.
\end{enumerate}
Let us start with the interval $\left[2/3,1\right]$. In this interval the function $\underline{F_\rho}$ reads as follows:
\[
\underline{F_\rho}(t) =
    e^{-2\rho}\left[ \dfrac{4}{9\pi} \right] + e^{2\rho}\left[ \dfrac{4}{9\pi^2} \sin\left( \dfrac{3}{2}\pi t \right) - \dfrac{4}{9\pi} \right],
\]
hence it has derivative
\[
\underline{F'_\rho}(t) =
    e^{2\rho}\left[ \dfrac{2}{3\pi} \cos\left( \dfrac{3}{2}\pi t \right)\right],
\]
which is always non-positive. This means that the maximum in the interval $\left[2/3,1\right]$ is attained at $2/3$ and is equal to
\[
\underline{F_\rho}\left( \dfrac{2}{3} \right) = \dfrac{4}{9\pi}\left(e^{-2\rho}-e^{2\rho} \right)<0, \quad \text{for every } \rho>0.
\]
Then, we seek a positive maximum for the function $\underline{F_\rho}$ in the interval $\left(0,2/3\right)$. In this interval, we have that
\[
\underline{F'_\rho}(t) = e^{-2\rho}\left[ \dfrac{2}{3\pi}\cos\left( 
\dfrac{3}{2}\pi t \right)+\dfrac{2}{3\pi} \right] + e^{2\rho}\left[ \dfrac{2}{3\pi} \right],
\]
which is $0$ if and only if $\cos\left( \dfrac{3}{2}\pi t \right) = e^{4\rho}-1$. Since $e^{4\rho}-1>0\geq-1$ for every $\rho>0$, the equality admits solutions if and only if $e^{4\rho}-1\leq 1$, i.e. if and only if $\rho\leq\ln\sqrt[4]{2}$. Actually, we have to exclude $\rho=\ln\sqrt[4]{2}$, since
\[
\cos\left( \dfrac{3}{2}\pi t \right) = 1 \iff t \in \left\{ 0, \dfrac{4}{3} \right\}\left(\not\subseteq\left( 0,\dfrac{2}{3} \right)\right).
\]
Since $\underline{F'_\rho}(t)=0$ can be solved if and only if $\rho<\ln\sqrt[4]{2}$, then for $\rho \geq \ln\sqrt[4]{2}$ it cannot be solved. Thus, for $\rho \geq \ln\sqrt[4]{2}$ we must have either $\underline{F'_\rho}(t)>0$ or $\underline{F'_\rho}(t)<0$ for every $t \in (0,2/3)$. Since $\underline{F_\rho}(2/3)<0$ for every $\rho>0$, the constant sign of $\underline{F_\rho'}$ yields
\[
\underline{F_\rho}(t) < 0 \quad \text{for every } t \in (0,2/3).
\]
Therefore, for $\rho\geq \ln\sqrt[4]{2}$ the function is always non-positive and its maximum value is $\underline{F_\rho}(0)=0$.

Now, let us search for the positive maximum whenever $0<\rho<\ln\sqrt[4]{2}\approx 0.173$. In this case, the maximum point can be obtained by setting
\[
\underline{F'_\rho}(t) \geq 0 \iff \cos\left( \dfrac{3}{2}\pi t \right) \geq e^{4\rho}-1.
\]
Furthermore, we note that $\underline{F'_\rho}(t_\rho) = 0,$ where
\[
t_\rho = \dfrac{2}{3\pi} \arccos(e^{4\rho}-1),
\]
$\underline{F'_\rho}(t) > 0$ for every $t\in\left(0,t_\rho\right)$, and $\underline{F'_\rho}(t) < 0$ for every $t\in\left(t_\rho,2/3\right)$. Hence, the only maximum point of $\underline{F_\rho}(t)$ in this interval is
\[
t_\rho =
\begin{cases}
    \dfrac{2}{3\pi} \arccos(e^{4\rho}-1), &\text{if}\quad 0<\rho<\ln\sqrt[4]{2},\\
    0, &\text{if}\quad \rho\geq\ln\sqrt[4]{2}.
\end{cases}
\]
Note that the value $\underline{F_\rho}(t_\rho)>0$ whenever $0<\rho<\ln\sqrt[4]{2}$. In Figure~\ref{fig2} we plot the graph of $\underline{F_\rho}(t_\rho)$ considered as a function of $\rho\in(0,+\infty)$. 
\begin{figure}[h!]
    \centering
    \includegraphics[width=0.5\linewidth]{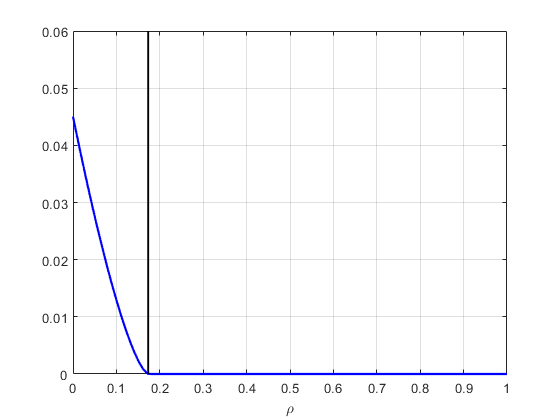}
    \caption{Graph of the functions $\underline{F_\rho}(t_\rho)$ and $\rho=\ln\sqrt[4]{2}$.}
    \label{fig2}
\end{figure}

Thus, for every $\rho \in (0, \ln\sqrt[4]{2})$, due to Theorem~\ref{EHI}, there exist $\lambda_\rho^+>0$, $\lambda_\rho^-<0$ and $u^+,u^-\in\partial B_\rho$ such that $(\lambda_\rho^+,u^+)$ and $(\lambda_\rho^-,u^-)$ solve the integral equation~\eqref{HIEmista}. 

Now, assume that the couple $(\lambda_\rho,u_\rho)$ satisfies the integral equation~\eqref{HIEmista}. Then, since $(5b)$ holds, due to Theorem~\ref{EHI} we obtain the localization 
\[
-a(\rho):=-\dfrac{\rho}{\underline{F_\rho}(t_\rho)}\leq\lambda_\rho \leq \dfrac{\rho}{\underline{F_\rho}(t_\rho)}=:a(\rho).
\]
Therefore, condition $(6b)$ holds. Figure~\ref{localmix} illustrates the obtained localization for the couples~$(\lambda_\rho,u_\rho)$.
\begin{figure}[h!]
    \centering
    \includegraphics[width=0.5\linewidth]{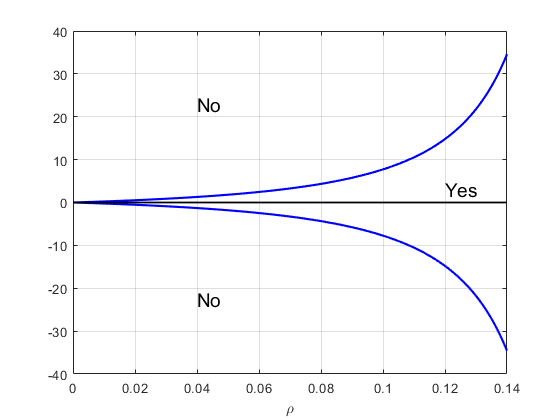}
    \caption{The region between $-a(\rho)$ and $a(\rho)$, for $\rho\leq0.14$.}
    \label{localmix}
\end{figure}
\end{ex}
We now illustrate the case of homogeneous Dirichlet BCs.
\begin{ex}
Consider the BVP
\begin{equation}\label{hdpeq}
\begin{cases}
-u''(t) = \lambda \sin\left( \dfrac{3}{2}\pi t \right) \dfrac{e^{u(t)}}{\int_0^1 e^{u(x)} \, dx}, & t\in(0,1),\\
u(0)=u(1)=0.
\end{cases}
\end{equation}
To the BVP~\eqref{hdpeq} we associate the integral equation
\begin{equation}\label{HIEdir}
    u(t) = \lambda \int_{0}^{1} k(t,s) \dfrac{\sin\left(\frac{3}{2}\pi s\right)e^{u(s)}}{\int_0^1 e^{u(x)} \, dx} \, ds,
\end{equation}
where $k$ is as in~\eqref{greendir}.

As in Example~\eqref{hmp}, we compute the two functions $\underline{F_\rho}$ and $\overline{F_\rho}$, that occur in~\eqref{F_low} and~\eqref{F_upp}, but with $k$ as in~\eqref{greendir}.
By direct calculation, we get that
\[
\int_0^{\frac{2}{3}} k(t,s) \sin\left( \dfrac{3}{2}\pi s \right) \,ds =
\begin{cases}
    \dfrac{4}{9\pi^2} \sin\left( \dfrac{3}{2}\pi t \right) + \dfrac{2t}{9\pi}, & \text{if}\quad 0 \leq t \leq \dfrac{2}{3},\vspace{1.5mm}\\
    \dfrac{4}{9\pi}(1-t), & \text{if}\quad \dfrac{2}{3} \leq t \leq 1,
\end{cases}
\]
and
\[
\int_{\frac{2}{3}}^1 k(t,s) \sin\left( \dfrac{3}{2}\pi s \right) \,ds =
\begin{cases}
    \dfrac{4t}{9\pi^2}-\dfrac{2t}{9\pi}, & \text{if}\quad 0 \leq t \leq \dfrac{2}{3},\vspace{1.5mm}\\
    \dfrac{4}{9\pi^2} \sin\left( \dfrac{3}{2}\pi t \right) + \dfrac{4}{9\pi}(t-1)+\dfrac{4t}{9\pi^2}, & \text{if}\quad \dfrac{2}{3} \leq t \leq 1.
\end{cases}
\]
In other words, we have that
\[
\underline{F_\rho}(t) =
\begin{cases}
    e^{-2\rho}\left[ \dfrac{4}{9\pi^2} \sin\left( \dfrac{3}{2}\pi t \right) + \dfrac{2t}{9\pi} \right] + e^{2\rho}\left[ \dfrac{4t}{9\pi^2}-\dfrac{2t}{9\pi} \right], & \text{if}\quad 0 \leq t \leq \dfrac{2}{3},\vspace{1.5mm}\\
    e^{-2\rho}\left[ \dfrac{4}{9\pi}(1-t) \right] + e^{2\rho}\left[ \dfrac{4}{9\pi^2} \sin\left( \dfrac{3}{2}\pi t \right) + \dfrac{4}{9\pi}(t-1)+\dfrac{4t}{9\pi^2} \right], & \text{if}\quad \dfrac{2}{3} \leq t \leq 1,
\end{cases}
\]
and
\[
\overline{F_{\rho}}(t) =
\begin{cases}
    e^{2\rho}\left[ \dfrac{4}{9\pi^2} \sin\left( \dfrac{3}{2}\pi t \right) + \dfrac{2t}{9\pi} \right] + e^{-2\rho}\left[ \dfrac{4t}{9\pi^2}-\dfrac{2t}{9\pi} \right], & \text{if}\quad 0 \leq t \leq \dfrac{2}{3},\vspace{1.5mm}\\
    e^{2\rho}\left[ \dfrac{4}{9\pi}(1-t) \right] + e^{-2\rho}\left[ \dfrac{4}{9\pi^2} \sin\left( \dfrac{3}{2}\pi t \right) + \dfrac{4}{9\pi}(t-1)+\dfrac{4t}{9\pi^2} \right], & \text{if}\quad \dfrac{2}{3} \leq t \leq 1.
\end{cases}
\]
Note that the functions $\underline{F_\rho}$ and $\overline{F_\rho}$ are continuous and $\underline{F_\rho}(0)=\underline{F_\rho}(1)=0$.

Figure~\ref{fig4} illustrates the graphs of the functions $\underline{F_\rho}$ and $\overline{F_\rho}$ in the cases $\rho=10^{-1}$ and $\rho=1$.
\begin{figure}[h!]
    \centering
    \includegraphics[width=0.9\linewidth]{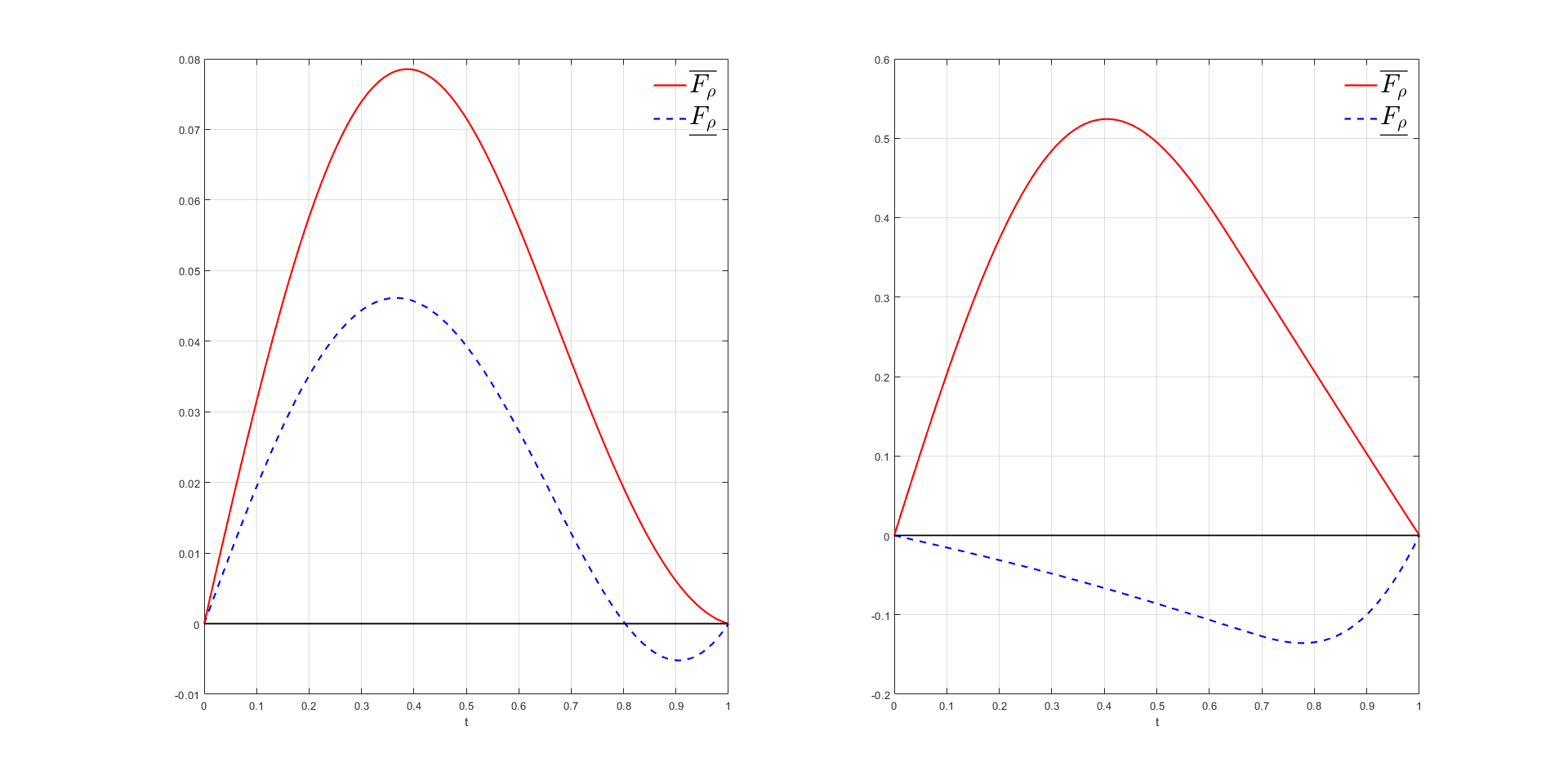}
    \caption{The plot on the left illustrates the case $\rho=10^{-1}$, and the one on the right displays the case $\rho=1$.}\label{fig4}
\end{figure}

We now seek the values of $\rho$ such that the function $\underline{F_\rho}$ achieves a positive maximum, so that the condition $(5b)$ holds. To do this, we apply the following scheme:
\begin{enumerate}
    \item We study $\underline{F_\rho}$ in the interval $\left[ 2/3,1 \right]$ and we get that for $\rho$ big enough, say $\rho\geq\rho_0$, the function does not attain a positive maximum in this interval. If $\rho<\rho_0$, the function restricted to the interval $[2/3,1]$ attains a positive maximum at $2/3$.

    \item We study $\underline{F_\rho}$ in the interval $\left(0,2/3\right)$ (we exclude $0$ since $\underline{F_\rho}(0)=0$). Again, we get that for $\rho$ big enough, say $\rho\geq\rho_1$, the function does not attain a positive maximum in this interval. Therefore, a positive maximum is attained in $(0,2/3)$ for~$\rho<\rho_1$.

    \item We put the previous steps together: a positive maximum is attained in $[0,1]$ if and only if $0<\rho<\max\{\rho_0,\rho_1\}$.
\end{enumerate}
Let us start with the interval $\left[2/3,1\right]$. In this interval the function $\underline{F_\rho}$ reads as follows:
\[
\underline{F_\rho}(t)=e^{-2\rho}\left[ \dfrac{4}{9\pi}(1-t) \right] + e^{2\rho}\left[ \dfrac{4}{9\pi^2} \sin\left( \dfrac{3}{2}\pi t \right) + \dfrac{4}{9\pi}(t-1)+\dfrac{4t}{9\pi^2} \right],
\]
hence it has derivative
\[
\underline{F'_\rho}(t)=-\dfrac{4}{9\pi}e^{-2\rho} + e^{2\rho}\left[ \dfrac{2}{3\pi} \cos\left( \dfrac{3}{2}\pi t \right) + \dfrac{4}{9\pi}+\dfrac{4}{9\pi^2} \right].
\]
In particular, we get that
\begin{equation}\label{equazione}
    \underline{F'_\rho}(t) = 0 \iff \cos\left( \dfrac{3}{2}\pi t \right) = \dfrac{2}{3}\left( e^{-4\rho}-1-\dfrac{1}{\pi} \right).
\end{equation}
One can solve this equation if and only if
\[
-\dfrac{3}{2} \leq e^{-4\rho} - 1 - \dfrac{1}{\pi} \leq \dfrac{3}{2}.
\]
The previous chain of inequalities is true for every $\rho>0$. Indeed,
\begin{align*}
\rho>0 \quad&\Rightarrow\quad 0 < e^{-4\rho} < 1\\
&\Rightarrow\quad -1-\dfrac{1}{\pi}<e^{-4\rho}-1-\dfrac{1}{\pi}<1-1-\dfrac{1}{\pi}=-\dfrac{1}{\pi}\\
&\Rightarrow\quad -\dfrac{2}{3}\left( 1+\dfrac{1}{\pi} \right)<\dfrac{2}{3}\left( e^{-4\rho}-1-\dfrac{1}{\pi} \right)<-\dfrac{2}{3\pi}.
\end{align*}
Furthermore, note that $-\dfrac{2}{3\pi}<0\leq1$. At the same time we have
\[
-\dfrac{2}{3}\left( 1+\dfrac{1}{\pi} \right) = \dfrac{-2\pi-2}{3\pi} \geq \dfrac{-2\pi-\pi}{3\pi} = -1.
\]
So, to summarize, the equation~\eqref{equazione} is solvable for every $\rho>0$. As in the previous example, we find the critical points by solving
\[
\underline{F'_\rho}(t) \geq 0 \iff \cos\left( \dfrac{3}{2}\pi t \right) \geq \dfrac{2}{3}\left( e^{-4\rho}-1-\dfrac{1}{\pi} \right).
\]
One finds that in $\left[2/3,1\right]$ there is a minimum at $t=\dfrac{4}{3}-\dfrac{2}{3\pi}\arccos\left( \dfrac{2}{3}\left( e^{-4\rho}-1-\dfrac{1}{\pi} \right) \right)$.\\
In other words, to seek a maximum in this interval, we need to evaluate the function at $2/3$ and $1$. We obtain that
\[
\underline{F_\rho}(1)=0 \quad\text{and}\quad \underline{F_\rho}\left(\dfrac{2}{3}\right) = \dfrac{4}{27\pi}e^{-2\rho}+\dfrac{8-4\pi}{27\pi^2}e^{2\rho}.
\]
Therefore, a positive maximum in $\left[ 2/3,1 \right]$ is attained if and only if
\[
\underline{F_\rho}\left(\dfrac{2}{3}\right) = \dfrac{4}{27\pi}e^{-2\rho}+\dfrac{8-4\pi}{27\pi^2}e^{2\rho} > 0 \iff e^{-4\rho}>\dfrac{\pi-2}{\pi} \iff \rho<\ln\sqrt[4]{\dfrac{\pi}{\pi-2}}\approx0.253.
\]

Now, we study $\underline{F_\rho}(t)$ in the interval $\left(0,2/3\right)$ and we have
\[
\underline{F_\rho}(t) = e^{-2\rho}\left[ \dfrac{4}{9\pi^2} \sin\left( \dfrac{3}{2}\pi t \right) + \dfrac{2t}{9\pi} \right] + e^{2\rho}\left[ \dfrac{4t}{9\pi^2}-\dfrac{2t}{9\pi} \right],
\]
thus
\[
\underline{F'_\rho}(t) = e^{-2\rho}\left[ \dfrac{2}{3\pi} \cos\left( \dfrac{3}{2}\pi t \right) + \dfrac{2}{9\pi} \right] + e^{2\rho}\left[ \dfrac{4}{9\pi^2}-\dfrac{2}{9\pi} \right].
\]
A direct calculation yields
\[
\underline{F'_\rho}(t) \geq 0 \iff \cos\left( \dfrac{3}{2}\pi t \right) \geq e^{4\rho} \left[ \dfrac{\pi-2}{3\pi} \right]-\dfrac{1}{3}.
\]
To have this inequality solvable, it's necessary to have that $e^{4\rho} \left[ \dfrac{\pi-2}{3\pi} \right]-\dfrac{1}{3} \in [-1,1]$. Actually, we have to exclude the case in which $e^{4\rho} \left[ \dfrac{\pi-2}{3\pi} \right]-\dfrac{1}{3} = 1$, since
\[
\cos\left( \dfrac{3}{2}\pi t \right) \geq 1 \iff \cos\left( \dfrac{3}{2}\pi t \right) = 1 \iff t\in\left\{0,\dfrac{4}{3}\right\}\left(\not\subseteq\left( 0,\dfrac{2}{3} \right)\right).
\]
Therefore, one has to search the values of $\rho$ such that
\[
-1\leq e^{4\rho}\left[ \dfrac{\pi-2}{3\pi} \right]-\dfrac{1}{3}<1.
\]
We observe that
\[
e^{4\rho} \left[ \dfrac{\pi-2}{3\pi} \right]-\dfrac{1}{3} > -\dfrac{1}{3}>-1 \quad \text{for every }\rho>0.
\]
Hence, we must solve the inequality $e^{4\rho} \left[ \dfrac{\pi-2}{3\pi} \right]-\dfrac{1}{3} < 1$ to get every possible value that $\rho$ may assume. In particular one gets that
\[
e^{4\rho} \left[ \dfrac{\pi-2}{3\pi} \right]-\dfrac{1}{3} < 1 \iff e^{4\rho} < \dfrac{4\pi}{\pi-2} \iff \rho < \ln\sqrt[4]{\dfrac{4\pi}{\pi-2}} \approx 0.599.
\]
To summarize, $\underline{F_\rho}$ attains a positive maximum in the whole interval $[0,1]$ if and only if $\rho$ is such that
\[
0 < \rho < \max\left\{ 
\ln\sqrt[4]{\dfrac{\pi}{\pi-2}},\ln\sqrt[4]{\dfrac{4\pi}{\pi-2}} \right\}=\ln\sqrt[4]{\dfrac{4\pi}{\pi-2}}.
\]
In this case, the maximum point is
\[
t_\rho = \dfrac{2}{3\pi} \arccos\left( e^{4\rho}\dfrac{\pi-2}{3\pi}-\dfrac{1}{3} \right).
\]
In conclusion, the maximum point is
\[
t_\rho =
\begin{cases}
    \dfrac{2}{3\pi} \arccos\left( e^{4\rho}\dfrac{\pi-2}{3\pi}-\dfrac{1}{3} \right), &\text{if}\quad 0 < \rho < \ln\sqrt[4]{\dfrac{4\pi}{\pi-2}},\\
    0, &\text{if}\quad \rho \geq \ln\sqrt[4]{\dfrac{4\pi}{\pi-2}}.
\end{cases}
\]
In Figure~\ref{fig5} we plot the graph of $\underline{F_\rho}(t_\rho)$, considered as a function of $\rho\in(0,+\infty)$.

\begin{figure}[hbt!]
    \centering
    \includegraphics[width=0.5\linewidth]{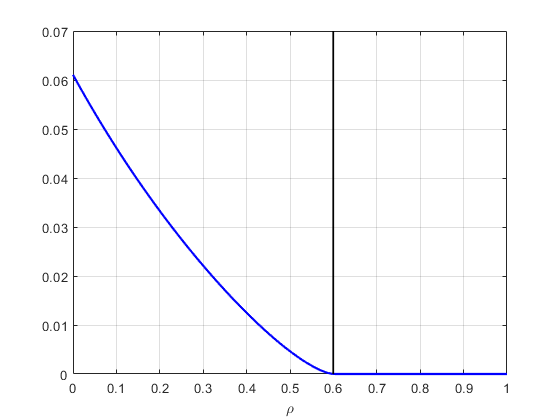}
    \caption{Graph of the functions $\underline{F_\rho}(t_\rho)$ and $\rho=\ln\sqrt[4]{\dfrac{4\pi}{\pi-2}}$.}\label{fig5}
\end{figure}

Thus, for every $\rho \in \Bigl(0, \ln\sqrt[4]{\dfrac{4\pi}{\pi-2}} \Bigr)$, due to Theorem~\ref{EHI}, there exist $\lambda_\rho^+>0$, $\lambda_\rho^-<0$ and $u^+,u^-\in\partial B_\rho$ such that $(\lambda_\rho^+,u^+)$ and $(\lambda_\rho^-,u^-)$ solve the integral equation~\eqref{HIEdir}. 

Now, assume that the couple $(\lambda_\rho,u_\rho)$ satisfies the integral equation~\eqref{HIEdir}. Then, since~$(5b)$ holds, due to Theorem~\ref{EHI} we obtain the localization 
\[
-a(\rho):=-\dfrac{\rho}{\underline{F_\rho}(t_\rho)}\leq\lambda_\rho \leq \dfrac{\rho}{\underline{F_\rho}(t_\rho)}=:a(\rho).
\]
Therefore, condition $(6b)$ holds. Figure~\ref{localdir} illustrates the obtained localization for the couples~$(\lambda_\rho,u_\rho)$.
\begin{figure}[H]
    \centering
    \includegraphics[width=0.5\linewidth]{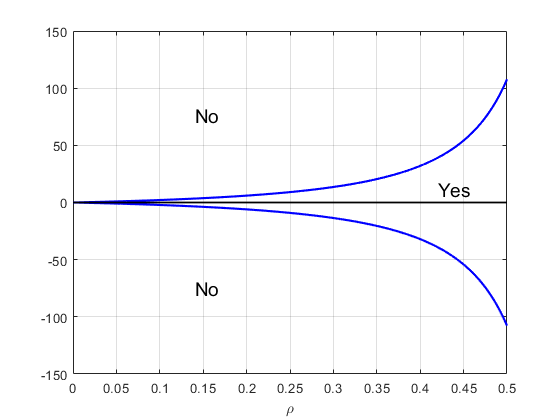}
    \caption{The region between $-a(\rho)$ and $a(\rho)$, for $\rho\leq0.5$.}
    \label{localdir}
\end{figure}
\end{ex}

\section*{Acknowledgements}
The authors would like to thank the anonymous Referees for the careful reading of the manuscript and the constructive comments.
G.~Infante is a member of the ``Gruppo Nazionale per l'Analisi Matematica, la Probabilit\`a e le loro Applicazioni'' (GNAMPA) of the Istituto Nazionale di Alta Matematica (INdAM), the UMI Group TAA  ``Approximation Theory and Applications'' and the ``The Research ITalian network on Approximation (RITA)''. G.~Infante
was partly funded by the Research project of MUR - Prin 2022 “Nonlinear differential problems with applications to real phenomena” (Grant Number:~2022ZXZTN2).

\end{document}